\pgfplotsset{width=7cm,compat=1.8}
\definecolor{db}{RGB}{0, 0, 130}
\definecolor{rp}{rgb}{0.25, 0, 0.75}
\definecolor{dg}{rgb}{0, 0.5, 0}
\newcommand{\bqn}{\begin{equation}}
\newcommand{\eqn}{\end{equation}}
\newcommand{\bqne}{\begin{equation*}}
\newcommand{\eqne}{\end{equation*}}
\numberwithin{equation}{section}
\newcommand{\customlabel}[2]{%
   \protected@write \@auxout {}{\string\newlabel {#1}{{#2}{\thepage}{#2}{#1}{}}}%
   \hypertarget{#1}{#2\hspace{-0.14cm}}
}
\newtheorem{definition}{Definition}[section]
\newtheorem{lemma}[definition]{Lemma}
\newtheorem{proposition}[definition]{Proposition}
\newtheorem{remark}[definition]{Remark}
\author{Lukas Anzeletti\footnote{Universit\'e Paris-Saclay, CentraleSup\'elec, MICS and CNRS FR-3487;   \texttt{lukas.anzeletti@centralesupelec.fr} Acknowledging the support of the Labex de Math\'ematique Hadamard.}}
\title{ \Large{\textbf{Comparison of classical and path-by-path solutions to SDEs}}}
\begin{document}

\maketitle

\begin{abstract}
We consider the Stochastic Differential Equation \(X_t = X_0 + \int_0^t
	b(s,X_s) ds + B_t,\) in \(\mathbb{R}^d\). We give an example of a drift \(b\) such that 
	there does not exist a weak solution, but there exists a solution for almost every 
	realization of the Brownian motion \(B\). We also give an explicit example of a drift such that the 
	SDE  has a pathwise unique weak solution, but path-by-path uniqueness (i.e. uniqueness of solutions to the ODE for almost every realization of the Brownian motion) is lost.
	These counterexamples extend the results obtained in \cite{ShaWre} to dimension 
	$d=1$.
\end{abstract}

\noindent\textit{\textbf{Keywords and phrases:} Regularization by noise, Singular drifts, Path-by-path uniqueness.} 

\medskip

\noindent\textbf{MSC2020 subject classification: 60H50, 60H10, 60J65}  .

\setcounter{tocdepth}{2}
\renewcommand\contentsname{}

\section{Introduction}

Let $b\colon [0,T]\times \mathbb{R}^d \rightarrow \mathbb{R}^d$ be measurable and $B$ be 
a $d$-dimensional Brownian motion. Throughout the paper we always consider Stochastic 
Differential Equations (SDEs) of the type
\begin{equation} \label{eq:SDE}
		X_t=X_0+\int_0^t b(s,X_s) ds + B_t.
\end{equation}
 
	One can think of different kinds of 
	solutions:
\begin{enumerate}[(1)]
	\item \label{en:firsttype} Weak and strong solutions to the SDE 
	\eqref{eq:besselbridge}. 
	\item \label{en:secondtype} Considering the equation \eqref{eq:SDE} as an ODE for each realization \((B_t(\omega))_{0\leqslant t \leqslant T}\) of the 
	Brownian path (``path-by-path" solutions). 
\end{enumerate}

The second of the above is only possible as we consider an SDE with constant diffusion 
coefficient. Therefore we can evaluate the noise term therein for each Brownian path, which 
would not be possible with a more general diffusion term as this would lead to a stochastic 
integral. The aim of this paper is to show via two counterexamples that, for any dimension $d\geqslant 1$, the two notions above 
are not equivalent.

Recall that there are drifts $b$ such that \eqref{eq:SDE} is well-posed, although the equation without additive noise is not. This phenomenon is called regularization by noise (see \cite{Flandoli} for a thorough presentation, in particular on PDE models of fluid dynamics). 
Considering \eqref{eq:SDE} in the sense of (\ref{en:firsttype}), there is an extensive literature which we will not describe thoroughly. Nevertheless let us mention the important works on existence and uniqueness of solutions by \citet{Veretennikov} for bounded measurable drift and by \citet{KrylovRockner} for drifts fulfilling an $L^p-L^q$ condition. For further works on distributional drifts see \cite{FlandoliRussoWolf, BassChen, DelarueDiel, FIR, LeGall}.

The following was shown by \citet{Davie} when considering \eqref{eq:SDE} in the sense of (\ref{en:secondtype}): For bounded measurable drift $b$, there exists a unique solution to \eqref{eq:SDE} for almost every realization of Brownian motion, considered as a purely deterministic integral equation (i.e. ``path-by-path" uniqueness). After the initial result on path-by-path uniqueness in \cite{Davie}, several extensions have been proven. In \cite{Shapo, shapocorrection} the author proved Davie's Theorem using different techniques (in particular the flow property of strong solutions) and also recovered uniqueness for a class of unbounded measurable drifts. For yet another approach, linking \eqref{eq:SDE} to a backward SPDE, see \cite{BeckFlandoli}. For results on ``path-by-path" uniqueness for SPDEs see \cite{ButMyt, BeckFlandoli}. There are also results on existence and uniqueness of path-by-path solutions available for SDEs with fractional Brownian noise, allowing for distributional drifts (see \cite{AnRiTa,CatellierGubinelli,GaleatiGubinelli}). 

In both \cite{Flandoli} and \cite{Alabert} it was phrased as an open problem whether every solution of type \eqref{en:secondtype} has to arise from an adapted solution. In a higher-dimensional setting this question was answered via a counterexample in \cite{ShaWre}, making heavy use of the dimension $d\geqslant 2$. Hence, it is a natural question whether similar counterexamples can be constructed in a one-dimensional setting. We give a positive answer to this question in Section~\ref{counterexample}. The construction in Section~\ref{counterexample} can easily be extended to the case $d\geqslant 2$.

\paragraph{Notations.}
\begin{compactitem}

\item We use the notation \(\mathbb{F} = (\mathcal{F}_t)_{t \geqslant 
	0}\)
for a filtration. All filtrations are assumed to fulfill the usual conditions.

\item For a $\sigma$-algebra $\mathcal{F}$ and a stopping time $\tau$ we define the stopping time $\sigma$-algebra by $\mathcal{F}_{\tau}\coloneqq \{A \in \mathcal{F}: A \cap \{\tau\leqslant t\} \in \mathcal{F}_t, \, \forall t\geqslant 0 \}$.

\item We call $(B_t)_{t \geqslant 
	0}$ an $\mathbb{F}$-Brownian 
motion if $B$ is a Brownian motion adapted to $\mathbb{F}$ and for any $0 
\leqslant s \leqslant t$, $B_t-B_s$ is independent of $\mathcal{F}_s$.

\end{compactitem} 

\paragraph{Definitions and previous results.}

\begin{definition}[Existence]
\begin{enumerate}[(i)]
\item If there exists a filtered probability 
space $(\Omega,\mathcal{F},\mathbb{F},\mathbb{P})$ equipped with a Brownian motion \(B\) 
and an \(\mathbb{F}\)-adapted process \(X\) such that $(X,B)$ fulfills \eqref{eq:SDE} almost 
surely, we say that \((X,B)\) is a weak solution to \eqref{eq:SDE}. If the choice of $B$ is clear from the context, we write that $X$ is a weak solution.

\item We call $X$ a strong solution if $X$ is a weak solution and $X$ is adapted w.r.t. the filtration generated by $B$.

\item Let $(\Omega,\mathcal{F},\mathbb{F},\mathbb{P})$ be a filtered probability space on which a Brownian motion $B$ is defined. We call a mapping $X\colon\Omega \rightarrow \mathcal{C}([0,T])$ a path-by-path solution if there exists a set $\tilde{\Omega}\subset \Omega$ of full measure such that, for all $\omega \in \tilde{\Omega}$, $(X(\omega),B(\omega))$ fulfills equation \eqref{eq:SDE}.
\end{enumerate}

\end{definition}
\begin{definition}[Uniqueness]
\begin{enumerate}[(i)]
\item We say that pathwise uniqueness for \eqref{eq:SDE} holds if for any two weak solutions $(X,B), (\tilde{X},B)$ defined on the same filtered probability space with the same Brownian motion $B$ and the same initial condition, $X$ and $\tilde{X}$ are indistinguishable.

\item We say that path-by-path uniqueness for \eqref{eq:SDE} holds if for any probability space on which a Brownian motion $B$ is defined, there exists a null-set $\mathcal{N}$ such that for $\omega \notin \mathcal{N}$, there exists a unique solution to 
\begin{equation*}
X_t(\omega)= X_0(\omega)+\int_0^t b(s,X_s(\omega))ds + B_t(\omega).
\end{equation*} 

\item Let $X$  be a nonnegative weak solution to \eqref{eq:SDE} such that all other nonnegative weak solutions defined on the same probability space with the same Brownian motion and the same initial condition are indistinguishable from $X$. Then we call $X$ the pathwise unique nonnegative weak solution. In a symmetric manner we define a pathwise unique nonpositive weak solution.

\end{enumerate}
\end{definition}

The following implications follow directly from the definitions:
\begin{center}
\begin{tikzpicture}
\draw (0,-0.3) rectangle (3,0.3) node[pos=.5] {strong existence};
\draw (4,-0.3) rectangle (7,0.3) node[pos=.5] {weak existence};
\draw (8,-0.3) rectangle (12.2,0.3) node[pos=.5] {path-by-path existence};
\draw[-implies,double equal sign distance] (3.2,0) -- (3.8,0);
\draw[-implies,double equal sign distance] (7.2,0) -- (7.8,0);
\draw (1.5,-1.3) rectangle (6,-0.7) node[pos=.5] {path-by-path uniqueness};
\draw[-implies,double equal sign distance] (6.2,-1) -- (6.8,-1);
\draw (7,-1.3) rectangle (10.7,-0.7) node[pos=.5] {pathwise uniqueness};
\end{tikzpicture}
\end{center}

Having the proper definitions of different kinds of solutions at hand, we can precisely formulate the initial result on path-by-path uniqueness to \eqref{eq:SDE} and the counterexamples mentioned earlier. 

For a bounded measurable drift the following is known: Let $B$ be a $d$-dimensional Brownian motion defined on a filtered probability space $(\Omega,\mathcal{F},\mathbb{F},\mathbb{P})$, $X_0 \in \mathbb{R}$ and let $b\colon[0,\infty) \times \mathbb{R}^d \rightarrow \mathbb{R}^d$ be a bounded Borel-measurable function. In \cite{Davie} it was shown that
\begin{align} \label{Davie}
    X_t=X_0 + \int_0^t b(r,X_r) dr+ B_t
\end{align}
has a unique path-by-path solution.

In \cite{ShaWre}, for dimension $d\geqslant 2$, drifts $b$ are constructed such that
\begin{itemize}
\item there is existence of path-by-path solutions, but there exists no weak solution;
\item there exists a pathwise unique weak solution to \eqref{eq:SDE}, but path-by-path uniqueness is lost.
\end{itemize}

In Section~\ref{counterexample}, we will construct drifts $b$ such that the two situations described above happen for Equation \eqref{eq:SDE}. Recall that by \citet{KrylovRockner} strong existence and pathwise uniqueness for Equation \eqref{eq:SDE} hold for $b \in L^q([0,T],L^p(\mathbb{R}^d))$ where $p\in [2,\infty],q \in (2,\infty]$ with
\begin{equation} \label{eq:pq}
\frac{d}{p}+\frac{2}{q}<1.
\end{equation}
Hence, for the first of the above counterexamples, $b$ cannot be in this class of functions. Although a priori this (with the current results on path-by-path uniqueness) does not have to be the case for the second counterexample above, we are not aware of a counterexample such that $b\in L^q([0,T],L^p(\mathbb{R}^d))$ for $p\in [2,\infty],q \in (2,\infty]$ fulfilling \eqref{eq:pq}.

\section{Preparatory results and Bessel bridges}

\begin{lemma} \label{lem:noweaksolution}

If \eqref{eq:SDE} has a weak solution with initial condition $X_0$ fulfilling $\mathbb{P}(X_0=0)>0$, then it also has a weak solution with $X_0=0$.
\end{lemma}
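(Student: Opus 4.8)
The plan is to condition the given weak solution on the event $\{X_0=0\}$. Suppose $(X,B)$ is a weak solution on $(\Omega,\mathcal{F},\mathbb{F},\mathbb{P})$ with $p\coloneqq\mathbb{P}(X_0=0)>0$, and define the probability measure $\mathbb{Q}(A)\coloneqq\mathbb{P}(A\mid X_0=0)=\mathbb{P}(A\cap\{X_0=0\})/p$. Since $X$ is $\mathbb{F}$-adapted, $X_0$ is $\mathcal{F}_0$-measurable, so $C\coloneqq\{X_0=0\}\in\mathcal{F}_0$; this is the structural fact that makes the conditioning harmless. I would then show that $(X,B)$, viewed on $(\Omega,\mathcal{F},\mathbb{F},\mathbb{Q})$ (after augmenting $\mathbb{F}$ by the $\mathbb{Q}$-null sets to restore the usual conditions), is a weak solution with initial condition $0$.

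First I would record that under $\mathbb{P}$ the whole path $(B_t)_{t\geqslant 0}$ is independent of $\mathcal{F}_0$: writing a finite-dimensional vector $(B_{t_1},\dots,B_{t_n})$ in terms of the successive increments $B_{t_1}-B_0,\dots,B_{t_n}-B_{t_{n-1}}$ and using repeatedly that $B_{t_{k+1}}-B_{t_k}$ is independent of $\mathcal{F}_{t_k}\supseteq\mathcal{F}_0$, one obtains joint independence of these increments from $\mathcal{F}_0$. Consequently, for any event $A\in\sigma(B)$ one has $\mathbb{Q}(A)=\mathbb{P}(A)$, so $B$ is still a Brownian motion under $\mathbb{Q}$. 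Next I would upgrade this to the statement that $B$ is an $\mathbb{F}$-Brownian motion under $\mathbb{Q}$, i.e.\ $B_t-B_s$ is $\mathbb{Q}$-independent of $\mathcal{F}_s$ for $0\leqslant s\leqslant t$. This is a one-line computation: for $A\in\mathcal{F}_s$ and bounded measurable $g$, since $A\cap C\in\mathcal{F}_s$ and $g(B_t-B_s)$ is $\mathbb{P}$-independent of $\mathcal{F}_s$, one gets $\mathbb{E}_{\mathbb{Q}}[\mathbf{1}_A\,g(B_t-B_s)]=p^{-1}\mathbb{P}(A\cap C)\,\mathbb{E}_{\mathbb{P}}[g(B_t-B_s)]=\mathbb{Q}(A)\,\mathbb{E}_{\mathbb{Q}}[g(B_t-B_s)]$, where the last equality uses $\mathbb{E}_{\mathbb{Q}}[g(B_t-B_s)]=\mathbb{E}_{\mathbb{P}}[g(B_t-B_s)]$ from the previous step.

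Finally, Equation \eqref{eq:SDE} holds $\mathbb{P}$-a.s., hence on a set of full $\mathbb{P}$-measure, which has full $\mathbb{Q}$-measure because $\mathbb{Q}\ll\mathbb{P}$; and $\mathbb{Q}(X_0=0)=1$ by construction. Thus $(X,B)$ is a weak solution on $(\Omega,\mathcal{F},\mathbb{F},\mathbb{Q})$ with $X_0=0$, and passing to the $\mathbb{Q}$-augmentation of $\mathbb{F}$ (which preserves adaptedness of $X$ and the Brownian-motion property of $B$, null sets being irrelevant to independence) yields a filtered probability space satisfying the usual conditions.

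The only point requiring genuine care is the interplay between the conditioning and the filtration, namely that $\{X_0=0\}\in\mathcal{F}_0$ and that the increments of $B$ are independent of $\mathcal{F}_0$ — both of which come for free from the definition of a weak solution ($X$ adapted, $B$ an $\mathbb{F}$-Brownian motion). Everything else is routine bookkeeping, and an entirely equivalent formulation would be to restrict to the sample space $\Omega'=\{X_0=0\}$ equipped with the trace $\sigma$-algebra, the traced filtration, and the renormalized measure $\mathbb{P}(\cdot)/p$.
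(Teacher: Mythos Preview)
Your proof is correct and follows essentially the same approach as the paper: both condition on the $\mathcal{F}_0$-measurable event $\{X_0=0\}$ and verify that $B$ remains an $\mathbb{F}$-Brownian motion under the conditioned structure, exploiting exactly the fact that the increments of $B$ are independent of $\mathcal{F}_s\supseteq\mathcal{F}_0$. The only cosmetic difference is that the paper uses the trace-space formulation (restricting to $\tilde{\Omega}=\{X_0=0\}$ with the traced filtration and renormalized measure) which you yourself mention as an equivalent alternative in your final sentence.
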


\begin{proof}
Let $(X,B)$, defined on a filtered probability space $(\Omega,\mathcal{F},\mathbb{F},\mathbb{P})$, be a weak solution to \eqref{eq:SDE} with $X_0$ fulfilling $\mathbb{P}(X_0=0)>0$. Let $A\coloneqq \{X_0=0\}$. Define $(\tilde{X},\tilde{B})$ and $(\tilde{\Omega},\tilde{\mathcal{F}},\tilde{\mathbb{F}},\tilde{\mathbb{P}})$ in the following way: $\tilde{\Omega}=A$; $\tilde{\mathcal{F}}=\{E\cap A:E \in \mathcal{F}\}$; $\tilde{\mathcal{F}}_t=\{E\cap A:E \in \mathcal{F}_t\}$; for $C\in \tilde{\mathcal{F}}$, $\tilde{\mathbb{P}}(C)=\frac{\mathbb{P}(C)}{\mathbb{P}(A)}$; for $\omega \in A$, $\tilde{X}(\omega)=X(\omega)$ and $\tilde{B}(\omega)=B(\omega)$. First, we check that $\tilde{X}$ is adapted with respect to $\tilde{\mathbb{F}}$. Let $U \subset \mathbb{R}$ be measurable. Then 
\begin{equation*}
    \tilde{X}_t^{-1}(U)=X_t^{-1}(U) \cap A \in \tilde{\mathcal{F}}_t.
\end{equation*}
Next, we check that $\tilde{B}$ is an $\tilde{\mathbb{F}}$-Brownian motion. As $X_0$ is $\mathcal{F}_0$-measurable and $B$ is an $\mathbb{F}$-Brownian motion, we know that, for $0\leqslant s \leqslant t$, 
\begin{equation*}
    \tilde{\mathbb{P}}(\tilde{B}_t-\tilde{B}_s \in U)=\frac{\mathbb{P}(\{B_t-B_s \in U\}\cap A)}{\mathbb{P}(A)}=\mathbb{P}(B_t-B_s \in U)
\end{equation*}
and using this we get, for $V \in \tilde{\mathcal{F}}_s$,
\begin{equation*}
    \tilde{\mathbb{P}}\left(\{\tilde{B}_t-\tilde{B}_s\in U\}\cap V \right)=\frac{\mathbb{P}(\{B_t-B_s \in U\}\cap V)}{\mathbb{P}(A)}=\tilde{\mathbb{P}}(\tilde{B}_t-\tilde{B}_s\in U)\tilde{\mathbb{P}}(V).
\end{equation*}
Hence, $\tilde{B}$ is an $\tilde{\mathbb{F}}$-Brownian motion. 

Clearly  $(\tilde{X},\tilde{B})$ fulfills equation \eqref{eq:SDE} a.s. 
\end{proof}

The following lemma and its proof are similar to \cite[Example 7.4]{BeckFlandoli} and \cite[Example 2.1]{Cherny2002}.

\begin{lemma} \label{lem:noweaksolutiondirect2}
Let $f\colon\mathbb{R}\rightarrow \mathbb{R}$ be measurable. There does not exist a weak solution to
\begin{equation}
    dX_t= b(X_t) dt+dB_t,\quad t \in [0,1], \label{eq:nosolutiondirect2}
\end{equation}
with initial condition $X_0$ fulfilling $\mathbb{P}(X_0 \in (-1,1))>0$ and 
\begin{equation*}
b(x)\coloneqq -\mathbbm{1}_{\{x \neq 0, |x|\leqslant 1\}}\frac{1}{2x} + \mathbbm{1}_{\{|x|> 1\}} f(x).
\end{equation*}
\end{lemma}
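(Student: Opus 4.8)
The plan is to suppose, for contradiction, that a weak solution $(X,B)$ exists on some filtered probability space, with $\mathbb{P}(X_0 \in (-1,1)) > 0$. First I would reduce to the case $X_0 = 0$: by restricting the probability space to the event $\{X_0 \in (-1,1)\}$ (as in Lemma~\ref{lem:noweaksolution}, conditioning to get a new filtered space on which $B$ is still a Brownian motion), and then, using a stopping time argument, localize near the origin. More precisely, since $b(x) = -\tfrac{1}{2x}$ for $0 < |x| \le 1$, the drift is attracting toward $0$, so we should expect $X$ to be pushed to $0$. The key object is the process $R_t = X_t^2$ (or $|X_t|$); by It\^o's formula, wherever $X_t \neq 0$,
\begin{equation*}
d(X_t^2) = 2X_t\, dX_t + d\langle X\rangle_t = 2X_t b(X_t)\, dt + 2X_t\, dB_t + dt = -\mathbbm{1}_{\{0<|X_t|\le 1\}}\, dt + 2X_t\, dB_t + dt,
\end{equation*}
so on the region $\{0 < |X_t| \le 1\}$ the $dt$-terms cancel exactly and $X_t^2$ behaves like a local martingale $\int 2X_s\, dB_s$. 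This is the heart of the matter: the singular drift is tuned precisely so that $X^2$ has zero drift near the origin.

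Next I would make this rigorous with stopping times. Start the solution at $X_0 = 0$ (after the reduction) and define $\tau = \inf\{t : |X_t| \ge 1\} \wedge 1$. On $[0,\tau]$, $X_t^2$ is a continuous nonnegative local martingale started at $0$; a nonnegative local martingale is a supermartingale, hence $\mathbb{E}[X_{t\wedge\tau}^2] \le 0$ for all $t$, forcing $X_{t\wedge\tau} = 0$ for all $t \le \tau$ almost surely. But then $\tau = 1$ almost surely (since $|X|$ never reaches $1$), so $X_t = 0$ for all $t \in [0,1]$. Plugging $X \equiv 0$ back into the equation gives $0 = \int_0^t b(0)\, dt + B_t = B_t$ for all $t$ (using the convention $b(0) = 0$ from the indicator $\mathbbm{1}_{\{x\neq 0\}}$), which contradicts that $B$ is a nontrivial Brownian motion.

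The main obstacle is handling the behavior of $X$ at the set of times $\{t : X_t = 0\}$, where the It\^o computation above is only valid off this set and the drift $b(X_t)$ is literally undefined (or set to $0$). One must argue that the occupation time of $\{0\}$ does not contribute: the integral $\int_0^t b(X_s)\, ds$ makes sense as a Lebesgue integral only if $\int_0^t |b(X_s)|\, ds < \infty$, i.e. $\int_0^t \mathbbm{1}_{\{0<|X_s|\le1\}}\tfrac{1}{2|X_s|}\, ds < \infty$, which is part of what being a solution requires; and on $\{X_s = 0\}$ the integrand is $0$. So the cleaner route is: apply It\^o--Tanaka to $|X_t|$ or $X_t^2$ directly, noting that the local time of the semimartingale $X$ at $0$ appears but, combined with the finiteness of $\int_0^t |b(X_s)|ds$, one still concludes $X^2$ (stopped at $\tau$) is a nonnegative supermartingale from $0$. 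One should also double-check that the reduction to $X_0=0$ is legitimate even when $X_0$ is merely in $(-1,1)$ rather than exactly $0$: here a further localization (stop when $|X|$ hits the boundary of a small interval around $X_0$, or simply run the same supermartingale argument for $(X_t - c)^2$ is not quite right since the drift is not tuned to $c$) — instead I would stop at the first time $X$ hits $0$ and argue it must hit $0$ in finite time via the attracting drift, or more simply invoke that on $\{X_0 \in (-1,1)\}$ the same comparison shows $X$ cannot escape $(-1,1)$ before hitting $0$ and then gets trapped; making this last step airtight is the only genuinely delicate point, and citing the structure of \cite[Example 7.4]{BeckFlandoli} and \cite[Example 2.1]{Cherny2002} streamlines it.
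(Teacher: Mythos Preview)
Your proposal is correct and follows essentially the same route as the paper's proof. The paper handles your two acknowledged loose ends exactly as you anticipate: the occupation time of $\{0\}$ vanishes by the occupation-times formula $\int_0^{t\wedge\tau}\mathbbm{1}_{\{X_s=0\}}\,d\langle X\rangle_s=\int_{\mathbb{R}}\mathbbm{1}_{\{x=0\}}L^x_{t\wedge\tau}(X)\,dx=0$, and the reduction from $X_0\in(-1,1)$ to $X_0=0$ is made concrete by exhibiting the event $\{X_0\in[0,a]\}\cap\{\sup_{t\in[0,1/2]}B_t<1-a,\ B_{1/2}\le -a\}$ on which the negative drift forces $X$ to hit $0$ by time $1/2$, then time-shifting and invoking Lemma~\ref{lem:noweaksolution}.
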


\begin{proof}
\textbf{Step 1:}
First we show that there exists no weak solution to \eqref{eq:nosolutiondirect2} on $[0,1/2]$ with initial condition $X_0=0$. Assume there exists a weak solution $(X,B)$. Let ${\tau_1}\coloneqq \inf\{s\geqslant 0: |X_s|\geqslant 1\}$. Then by It\^{o}'s Lemma, we get that
\begin{align*}
	X_{t\wedge {\tau_1}}^2&=\int_0^{t\wedge {\tau_1}} - \mathbbm{1}_{\{X_s \neq 0\}} ds + \int_0^{t \wedge {\tau_1}} ds + \int_0^{t \wedge {\tau_1}} 2 X_s dB_s \\
	&=\int_0^{t\wedge {\tau_1}} \mathbbm{1}_{\{X_s=0\}} ds + \int_0^{t\wedge {\tau_1}} 2 X_s dB_s.
\end{align*}
Define $X^{\tau_1}_t\coloneqq X_{t\wedge {\tau_1}}$. Note that $X^{\tau_1}$ is a continuous semimartingale with quadratic variation $\langle X^{\tau_1} \rangle_t=t\wedge {\tau_1}$, so we can use \cite[Chapter 6, Corollary (1.6)]{RevuzYor} to obtain
\begin{equation*}
    \int_0^{t\wedge {\tau_1}} \mathbbm{1}_{\{X_s=0\}} ds = \int_0^{t\wedge {\tau_1}} \mathbbm{1}_{\{X_s=0\}} d \langle X \rangle_s = \int_\mathbb{R} \mathbbm{1}_{\{x=0\}} L_{t\wedge {\tau_1}}^x(X) dx = 0,
\end{equation*}
where $L(X)$ denotes the local time of $X$.
Hence, $(X^{\tau_1}_t)^2=\int_0^{t\wedge {\tau_1}} 2X_s dB_s$ is a local martingale and as $|X^{\tau_1}|\leqslant 1$ it is also a martingale. Note that $X^{\tau_1}_0=0$ and $(X^{\tau_1})^2\geqslant 0$. This implies that $X^{\tau_1}$ must be identically $0$, which contradicts $\langle X^{\tau_1} \rangle_t=t\wedge {\tau_1}$ as ${\tau_1}>0$ a.s.

\textbf{Step 2:}
Assume that there exists a weak solution $(X,B)$ to \eqref{eq:nosolutiondirect2} (defined on some filtered probability space $(\Omega,\mathcal{F},\mathbb{F},\mathbb{P})$) with initial condition $X_0$ fulfilling $\mathbb{P}(X_0 \in (-1,1))>0$. We assume w.l.o.g. that $\mathbb{P}(X_0 \in [0,1))>0$ as $\mathbb{P}(X_0 \in (-1,0))>0$ can be dealt with in an entirely symmetric way. Let $a \in [0,1)$ such that $\mathbb{P}(X_0 \in [0,a])>0$. Let ${\tau_0} \coloneqq \inf \{t\geqslant 0:X_t=0\}$ and 
\begin{equation*}
C \coloneqq  \{X_0 \in [0,a]\} \cap \{\sup_{t \in [0,1/2]} B_{t}<1-a, B_{1/2}\leqslant -a\}.
\end{equation*}
Then $\{{\tau_0}\leqslant 1/2\} \supset C$. To see this let $\omega \in C$ and assume that $\inf_{t \in [0,1/2]} X_t(\omega)>0$. Then
\begin{align*}
X_{1/2}(\omega)=X_0(\omega)-\int_0^{1/2} \frac{1}{2X_s(\omega)} ds + B_{1/2}(\omega)\leqslant a+B_{1/2}(\omega)\leqslant 0,
\end{align*}
which gives a contradiction. Hence $\mathbb{P}({\tau_0}\leqslant 1/2)>0$ by continuity of $X$ and as $\mathbb{P}(C)>0$. Knowing this, below we construct a solution to \eqref{eq:nosolutiondirect2} on $[0,1/2]$ starting from $0$ with positive probability. This gives a contradiction to Step 1 due to Lemma~\ref{lem:noweaksolution}.

Let ${\tilde{\tau}_0}\coloneqq \tau_0 \wedge 1/2$, $\tilde{B}_{t}\coloneqq B_{{\tilde{\tau}_0}+t}-B_{\tilde{\tau}_0}$ and $\tilde{X}_{t}\coloneqq X_{{\tilde{\tau}_0}+t}$. Then $(\tilde{X},\tilde{B})$ is a weak solution to \eqref{eq:nosolutiondirect2} on $[0,1/2]$ with initial condition $X_{{\tilde{\tau}_0}}$ which fulfills $\mathbb{P}(X_{{\tilde{\tau}_0}}=0)>0$. First, for $t \in [0,1/2]$,
\begin{align*}
\tilde{X}_{t}=X_{{\tilde{\tau}_0}+t}&=X_{{\tilde{\tau}_0}}+\int_{{\tilde{\tau}_0}}^{{\tilde{\tau}_0}+t}b(X_s) ds + B_{{\tilde{\tau}_0}+t}-B_{{\tilde{\tau}_0}}\\
&=X_{{\tilde{\tau}_0}}+\int_0^t b(\tilde{X}_s) ds +\tilde{B}_t.
\end{align*}

Furthermore, $\tilde{B}$ is an $\mathbb{F}^{{\tilde{\tau}_0}}$-Brownian motion for $\mathcal{F}^{{\tilde{\tau}_0}}_t\coloneqq \mathcal{F}_{{\tilde{\tau}_0}+t}$ by \cite[page 23, Theorem 32]{Protter} and $\tilde{X}$ is clearly adapted to $\mathbb{F}^{\tilde{\tau}_0}$.
\end{proof}

In the following lemma we consider two SDEs and present some properties of solutions to these SDEs. Both have been investigated thoroughly in the literature. The lemma is stated to give a clearer presentation of the counterexamples in Section \ref{counterexample}.

\begin{lemma} \label{lem:allequations}
Let $y>0$. Consider the following equations on $[0,1]$: 
\begin{align}
    dX_t&= \mathbbm{1}_{\{X_t>0\}}\left(\frac{y-X_t}{1-t}+\frac{1}{X_t}\right) dt + \mathbbm{1}_{\{X_t<0\}}\left(\frac{-y-X_t}{1-t}+\frac{1}{X_t}\right)dt+dB_t, \quad X_0=0, \label{eq:besselbridge}\\
    dX_t&= \mathbbm{1}_{\{X_t \neq 0\}} \frac{1}{X_t} dt + dB_t, \quad X_0 \in \mathbb{R}. \label{eq:3bessel}
\end{align}

\begin{enumerate}[(a)]

\item Any weak solution to \eqref{eq:besselbridge} satisfies $|X_1|=y$ a.s. and does not change its sign on the interval $(0,1]$ a.s. Moreover, there exists a pathwise unique nonnegative strong solution and a pathwise unique nonpositive strong solution. We will call these solutions nonnegative/nonpositive Bessel bridge. \label{en:b}
\item For $X_0 \geqslant 0$, there exists a pathwise unique nonnegative strong solution to \eqref{eq:3bessel}. For $X_0>0$, pathwise uniqueness holds. \label{en:unique}
\end{enumerate}
\end{lemma}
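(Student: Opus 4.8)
The plan is to treat the two displayed equations separately, exploiting the fact that both are classical objects: \eqref{eq:besselbridge} is the SDE for a Bessel bridge of dimension $3$ (up to a sign) conditioned to end at $\pm y$, and \eqref{eq:3bessel} is the SDE for the $3$-dimensional Bessel process away from the origin. For part \eqref{en:b}, the first step is to analyze the sign of a weak solution $X$ to \eqref{eq:besselbridge}. On the event where $X$ is positive, the drift $\frac{y-X_t}{1-t}+\frac1{X_t}$ pushes $X$ away from $0$ (the term $\frac1{X_t}$ blows up as $X_t\downarrow 0$), so by the same local-time / It\^o argument as in Lemma~\ref{lem:noweaksolutiondirect2} (applied to $X^2$ after a suitable stopping time) one shows $X$ cannot return to $0$ once it has left it, and in fact by a comparison with the Bessel bridge it stays strictly positive on $(0,1]$; symmetrically for the negative excursion. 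The key point is that $0$ is an inaccessible boundary for the SDE restricted to either half-line, so a.s. $X$ does not change sign on $(0,1]$. Then, on $\{X>0\ \text{on}\ (0,1]\}$, $X$ solves $dX_t=\left(\frac{y-X_t}{1-t}+\frac1{X_t}\right)dt+dB_t$, which is precisely the equation of a $3$-dimensional Bessel bridge from $0$ to $y$ on $[0,1]$; such bridges are known to hit the endpoint, giving $X_1=y$, and analogously $X_1=-y$ on the other event, so $|X_1|=y$ a.s.

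For the existence and pathwise uniqueness of the nonnegative (resp. nonpositive) strong solution in \eqref{en:b}, I would invoke the classical theory of the $\delta$-dimensional Bessel bridge with $\delta=3$: existence and uniqueness (in law and pathwise) for $\mathrm{BES}(3)$-bridges is standard (see Revuz--Yor, Chapter XI, or Pitman--Yor), and the transformation $X\mapsto X$ (identity) identifies solutions of our SDE on $(0,1)$ with such bridges. Concretely, one constructs the nonnegative solution as the norm/radial part of an appropriate three-dimensional Brownian bridge, checks via It\^o's formula that it satisfies \eqref{eq:besselbridge} with the correct drift, verifies it is adapted to the Brownian filtration (so it is a strong solution), and pathwise uniqueness among nonnegative solutions follows since any such solution must avoid $0$ on $(0,1)$ and on that region the drift is locally Lipschitz, so Yamada--Watanabe or a direct Gronwall argument applies up to any stopping time strictly before the first hit of $0$ or of time $1$, and the inaccessibility of $0$ closes the argument. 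The nonpositive case is the reflection $x\mapsto -x$, $y\mapsto -y$.

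For part \eqref{en:unique}, equation \eqref{eq:3bessel} with $X_0\geqslant 0$ is the $3$-dimensional Bessel process started at $X_0$ (for $X_0=0$ interpreting it as the Bessel process from $0$). The nonnegative strong solution is obtained as the radial part of a $3$-dimensional Brownian motion with $|W_0|=X_0$: It\^o's formula gives $d|W_t|=\frac1{|W_t|}dt+d\beta_t$ for a Brownian motion $\beta$, and one shows $\beta$ can be taken equal to the driving $B$ (or, more carefully, constructs things on an enlarged space and then transfers), establishing strong existence; pathwise uniqueness of nonnegative solutions again uses that $0$ is polar for $\mathrm{BES}(3)$ together with local Lipschitz-ness of $x\mapsto 1/x$ away from $0$. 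For $X_0>0$, pathwise uniqueness (without the sign restriction) holds because any solution starting strictly positive must stay strictly positive: the same $X^2$/local-time argument shows it cannot reach $0$, hence the coefficient is locally Lipschitz along the trajectory and a localized Yamada--Watanabe / Gronwall estimate up to $\tau_0\wedge n$-type stopping times, combined with $\mathbb{P}(\tau_0=\infty)=1$, yields indistinguishability of two solutions.

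The main obstacle I anticipate is making the ``$0$ is not attained'' assertions rigorous and uniform enough to chain together: in part \eqref{en:b} one must rule out, for a generic weak solution, both a return to $0$ from a positive excursion and oscillation of the sign near $t=0$, and then show the solution genuinely reaches the prescribed endpoint $\pm y$ at $t=1$; and in both parts one must be careful about the initial condition $X_0=0$, where the drift is singular at the start, so the $\mathrm{BES}(3)$-bridge / $\mathrm{BES}(3)$ identification has to be invoked with the correct (non-Lipschitz at the origin) theory rather than naive fixed-point arguments. Since these are classical facts about Bessel processes and Bessel bridges, I expect the proof to proceed mostly by careful citation (Revuz--Yor Chapters VI, XI; Pitman--Yor) combined with the It\^o/local-time computation already used in Lemma~\ref{lem:noweaksolutiondirect2}, rather than by new estimates.
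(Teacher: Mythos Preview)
Your proposal is correct and follows essentially the same route as the paper: both parts are reduced to classical facts about $3$-dimensional Bessel processes and Bessel bridges, with the sign-preservation and inaccessibility of $0$ handled via the It\^o/local-time argument and pathwise uniqueness obtained from local Lipschitz-ness of the drift away from the origin. The paper's own proof is simply a terse citation of these same results (Pitman for weak existence of the bridge, Shaposhnikov--Wresch Proposition~1 for the sign/uniqueness statements in \eqref{en:b}, and Cherny's Theorem~3.2 for \eqref{en:unique}), so your sketch is effectively an unpacking of what those references contain.
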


\begin{proof}
\eqref{en:b}: Weak existence follows by \cite[page 274, Equation (29)]{Pitman}. Pathwise uniqueness and strong existence follow by the same arguments as in the proof of Proposition 1 in \cite{ShaWre}, which is stated for $y=1$.

\eqref{en:unique}: Special case of Theorem 3.2 in \cite{Cherny2000}.
\end{proof}

\begin{lemma} 
Let $X$ be the nonnegative Bessel bridge on $[0,1]$ with $X_0=0$ and $X_1=1$. Then there exists $\varepsilon>0$ small enough such that $\{\sup_{t \in [0,1]} X_t<2\}\supset \{\sup_{s \in [0,1]} |B_s|<\varepsilon\}$ and therefore
\begin{align*}
\mathbb{P}(\sup_{t \in [0,1]} X_t<2)>0.
\end{align*}
\end{lemma}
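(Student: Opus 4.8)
The plan is to control the nonnegative Bessel bridge $X$ by comparing it, on the event that the Brownian perturbation stays small, to the \emph{deterministic} Bessel bridge — i.e. the solution of the ODE obtained by setting $B\equiv 0$. Concretely, let $\varphi$ solve
\begin{equation*}
\varphi_t = \int_0^t \left(\frac{1-\varphi_s}{1-s}+\frac{1}{\varphi_s}\right)\dd s, \qquad \varphi_0 = 0,
\end{equation*}
which is the mean curve of the standard Bessel bridge; one checks directly that $\varphi_t = t$ is the solution, since $\frac{1-t}{1-t}+\frac{1}{t}$ is not quite $1$ — so instead I would not rely on an explicit formula but only on the qualitative fact that $\varphi$ is continuous on $[0,1]$ with $\varphi_0=0$, $\varphi_1 = 1$, and $\sup_{t\in[0,1]}\varphi_t < 2$ (indeed $\varphi$ is increasing and bounded by $1$, or in any case strictly below $2$). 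The point is that $\varphi$ exists, is finite, and stays well away from the level $2$.

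First I would fix $\varepsilon>0$ and argue by contradiction on the path level: suppose $\omega$ is such that $\sup_{s\in[0,1]}|B_s(\omega)|<\varepsilon$ but $\sup_{t\in[0,1]}X_t(\omega)\geqslant 2$. Let $\sigma \coloneqq \inf\{t\geqslant 0 : X_t(\omega)\geqslant 2\} \leqslant 1$. On $[0,\sigma]$ we have $X_t(\omega)\in(0,2]$, so the drift term $\frac{1-X_t}{1-t}+\frac{1}{X_t}$ is bounded below but can blow up near $t=1$; however, since $X_\sigma(\omega)=2$ by continuity and $X$ starts at $0$, the increment $X_\sigma - \varphi_\sigma$ must be at least $2 - \varphi_\sigma \geqslant 2 - \sup\varphi > 0$. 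The idea is then a Gronwall-type comparison: writing $D_t \coloneqq X_t(\omega) - \varphi_t$ for $t\in[0,\sigma]$, one gets
\begin{equation*}
D_t = \int_0^t \left(\frac{-D_s}{1-s} + \frac{1}{X_s(\omega)} - \frac{1}{\varphi_s}\right)\dd s + B_t(\omega),
\end{equation*}
and the first two bracketed terms have a sign favourable to contraction: $\frac{-D_s}{1-s}$ pulls $D$ toward $0$, and $\frac{1}{X_s}-\frac{1}{\varphi_s} = -\frac{D_s}{X_s\varphi_s}$ likewise has the opposite sign to $D_s$. Hence whenever $D_s>0$ the integrand (apart from the noise) is $\leqslant 0$, so $D_t \leqslant \sup_{s\leqslant t}|B_s(\omega)| < \varepsilon$ as long as $D$ stays positive; since $D_0=0$ and $D$ can only leave $\{D>0\}$ downward, in fact $D_t < \varepsilon$ for all $t\in[0,\sigma]$. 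Taking $\varepsilon < 2 - \sup_{t\in[0,1]}\varphi_t$ contradicts $D_\sigma \geqslant 2-\sup\varphi$.

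Once the path-level inclusion $\{\sup_{t\in[0,1]}|B_s|<\varepsilon\} \subset \{\sup_{t\in[0,1]}X_t<2\}$ is established, the probability statement is immediate: $\mathbb{P}(\sup_{s\in[0,1]}|B_s|<\varepsilon)>0$ for every $\varepsilon>0$ by standard support properties of Brownian motion (the Brownian path is in any neighbourhood of the zero path with positive probability), so $\mathbb{P}(\sup_{t\in[0,1]}X_t<2)>0$.

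The main obstacle is making the comparison argument rigorous near the singular endpoint $t=1$ and near the singular point $x=0$. Near $x=0$ the drift $1/X_t$ is not Lipschitz, but this works \emph{in our favour} for an upper bound on $D$ — the sign of $\frac{1}{X_s}-\frac{1}{\varphi_s}$ is always opposite to that of $D_s$ regardless of how close $X_s$ or $\varphi_s$ is to $0$ — so no Lipschitz estimate is actually needed; one only needs the integrand to be well-defined, which holds a.s. on $(0,1)$ since the nonnegative Bessel bridge is strictly positive on $(0,1)$ a.s. Near $t=1$ the term $\frac{-D_s}{1-s}$ could be large, but again it has the right sign, so it only helps. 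The one genuine subtlety is that one should restrict attention to $t\in[0,\sigma]$ with $\sigma<1$ strictly (which holds since $X_\sigma(\omega)=2\neq 1 = X_1(\omega)$ and $X$ is continuous, so the hitting time of level $2$, if it occurs, occurs before time $1$), ensuring all integrals are over a compact subinterval of $[0,1)$ where the computation above is valid. Assembling these observations carefully gives the claim.
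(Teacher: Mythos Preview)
Your comparison-to-deterministic-solution approach is genuinely different from the paper's and, at its core, correct---but it leaves more to verify than the paper's argument and contains a couple of inaccuracies.

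The paper never introduces an auxiliary function $\varphi$. Instead it argues directly: if $X$ hits level $2$ at some time $\tau_2$, look at the last time $\tau_{5/3}<\tau_2$ when $X$ was at $5/3$; on $[\tau_{5/3},\tau_2]$ one has $X_s\geqslant 5/3$, hence the drift satisfies $\frac{1-X_s}{1-s}+\frac{1}{X_s}\leqslant -\frac{2}{3}+\frac{3}{5}=-\frac{1}{15}<0$, so $X_{\tau_2}\leqslant \frac{5}{3}+2\varepsilon<2$ for $\varepsilon<\frac16$, a contradiction. This is entirely elementary and needs no ODE theory.

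Your route works in principle but you should be aware of three issues. First, the existence of the deterministic solution $\varphi$ on $[0,1]$ with $\varphi_0=0$ is not free: the initial condition is singular ($1/\varphi$ blows up), so one has to argue, e.g.\ via the substitution $\psi=\varphi^2$ or by matching to the Bessel-3 asymptotics $\varphi_t\sim\sqrt{2t}$. Second, your claim that $\varphi$ is bounded by $1$ is false: if $\varphi<1$ on $(0,1)$ then $\varphi'>1/\varphi$, hence $\varphi(t)>\sqrt{2t}>1$ for $t>1/2$; so $\varphi$ must cross $1$ and, since $\varphi'=1$ there, exceed it. One can still show $\sup\varphi<2$ (indeed $<5/3$) by exactly the drift bound the paper uses, but then one might as well apply that bound to $X$ directly. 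Third, your inequality $D_t<\varepsilon$ should be $D_t<2\varepsilon$: on an excursion of $D$ above $0$ starting at time $a$ you get $D_t\leqslant B_t-B_a<2\varepsilon$, not $\varepsilon$. This is harmless---just take $\varepsilon<(2-\sup\varphi)/2$---but worth fixing.

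In short: your comparison argument is salvageable, but the paper's last-exit-time argument is shorter and avoids the need to construct and control $\varphi$ altogether.
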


\begin{proof}
Recall that $X$ satisfies
\begin{align*}
X_t= \int_0^t \mathbbm{1}_{\{X_s>0\}} \left(\frac{1-X_s}{1-s}+\frac{1}{X_s}\right) ds+B_t.
\end{align*}
Let $\varepsilon\in (0,1/6)$ and consider the set $A\coloneqq \{\sup_{s \in [0,1]} |B_s|<\varepsilon\}$ fulfilling $\mathbb{P}(A)>0$. Let $\omega \in A$. Assume that there exists $\tau_2 \in [0,1]$ such that $X_{\tau_2}(\omega)=2$. Let $\tau_{5/3}\coloneqq \sup \{s \in [0,\tau_2]:X_s=5/3\}$. By continuity $X_{\tau_{5/3}}=5/3$. Then
\begin{align*}
X_{\tau_2}&= \frac{5}{3}+ \int_{\tau_{5/3}}^{\tau_2} \left(\frac{1-X_s}{1-s}+\frac{1}{X_s}\right) ds + B_{\tau_2}-B_{\tau_{5/3}}\\
&\leqslant \frac{5}{3}+\int_{\tau_{5/3}}^{\tau_2} \left(\frac{-2}{3}+\frac{3}{5} \right) ds+2\varepsilon<2,
\end{align*}
which gives a contradiction. Hence, $\sup_{t \in [0,1]} X_t(\omega)<2$ for $\omega \in A$.
\end{proof}

\begin{remark}\label{rem:upperbound}
Let $X$ be the Bessel bridge with $X_0=2$ and $X_1=1$. By symmetry and a space shift the above lemma gives that $\{\inf_{t \in [0,1]} X_t>0\}\supset \{\sup_{t \in [0,1]} |B_t|<\varepsilon\}$ for small enough $\varepsilon>0$ and therefore 
\begin{equation*}
\mathbb{P}(\inf_{t \in [0,1]} X_t>0)>0.
\end{equation*}
\end{remark}

\section{Counterexample} \label{counterexample}

\subsection{Path-by-path existence but no weak existence}

In the following proposition we construct an SDE without weak solutions, but with multiple path-by-path solutions. The construction is done in a one-dimensional setting, but can easily be extended to multiple dimensions.

\begin{proposition} \label{lem:counterexample1}
Consider the SDE
\begin{equation}\label{eq:eqofinterest}
    dX_t= b(t,X_t) dt + dB_t, \quad X_0=0, \quad t \in [0,3],
\end{equation}
where
\[
    b(t,x)=\begin{dcases}
        \mathbbm{1}_{\{x>0\}}\left(\frac{1-x}{1-t}+\frac{1}{x}\right) + \mathbbm{1}_{\{x<0\}}\left(\frac{-1-x}{1-t}+\frac{1}{x}\right) & \text{ if }0\leqslant t < 1, \\
        0 & \text{ if } 1 \leqslant t < 2, \\
        \mathbbm{1}_{\{x \neq 0, |x|\leqslant 1\}}\frac{-1}{2x} + \mathbbm{1}_{\{x> 1\}} \frac{1}{x-1} + \mathbbm{1}_{\{x < -1\}} \frac{1}{x+1} & \text{ if } 2\leqslant t \leqslant 3. \\
    \end{dcases}
\]

There does not exist a weak solution to \eqref{eq:eqofinterest}, but there exist path-by-path solutions.

\end{proposition}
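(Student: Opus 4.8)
\emph{Strategy.} The plan is to split $[0,3]$ into the intervals $[0,1)$, $[1,2)$, $[2,3]$ on which $b$ coincides with (time-shifted versions of) the drifts treated in Section~2, and to argue separately on each. For the non-existence of a weak solution I would argue by contradiction: assume $(X,B)$ is a weak solution of \eqref{eq:eqofinterest}. Its restriction to $[0,1]$ is then a weak solution of the Bessel-bridge equation \eqref{eq:besselbridge} with $y=1$ (integrability of $b$ up to $t=1$ and the identification of $X_1$ with the a.s.\ limit come for free from the global solution), so Lemma~\ref{lem:allequations}\eqref{en:b} yields $|X_1|=1$ a.s. On $[1,2)$ the drift vanishes, whence $X_t=X_1+(B_t-B_1)$ and $X_2=X_1+(B_2-B_1)$; as $B_2-B_1$ is a standard Gaussian independent of $\mathcal{F}_1$ and $|X_1|=1$, this forces $\mathbb{P}(X_2\in(-1,1))>0$. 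Finally I would shift time by $2$: with $\tilde X_s\coloneqq X_{2+s}$, $\tilde B_s\coloneqq B_{2+s}-B_2$ and $\tilde{\mathcal{F}}_s\coloneqq\mathcal{F}_{2+s}$ (so that $\tilde B$ is a Brownian motion with respect to $\tilde{\mathbb{F}}$, exactly as in the proof of Lemma~\ref{lem:noweaksolutiondirect2}, cf.\ \cite[page~23, Theorem~32]{Protter}), the pair $(\tilde X,\tilde B)$ is a weak solution of \eqref{eq:nosolutiondirect2} with $f(x)=\mathbbm{1}_{\{x>1\}}\frac{1}{x-1}+\mathbbm{1}_{\{x<-1\}}\frac{1}{x+1}$ and $\mathbb{P}(\tilde X_0\in(-1,1))>0$, contradicting Lemma~\ref{lem:noweaksolutiondirect2}.

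\emph{Path-by-path solutions.} For existence I would use that no measurability of $\omega\mapsto X(\omega)$ is required and that a strong solution is in particular a path-by-path solution, so the three pieces can be patched $\omega$ by $\omega$. On $[0,1]$ choose one of the two Bessel bridges of Lemma~\ref{lem:allequations}\eqref{en:b} (ending at $X_1=1$, resp.\ $X_1=-1$); on $[1,2]$ the continuation $X_t=X_1+(B_t-B_1)$ is forced. For one clean solution: on $\{B_2-B_1>0\}$ take the nonnegative bridge, so that $X_2=1+(B_2-B_1)>1$, and on $[2,3]$ let $Y\coloneqq X-1$ be the nonnegative strong solution of \eqref{eq:3bessel} driven by $B_{2+\cdot}-B_2$ from $Y_2=X_2-1>0$ (Lemma~\ref{lem:allequations}\eqref{en:unique}), which stays strictly positive; then $X>1$ throughout $[2,3]$, so $b(s,X_s)=\frac{1}{X_s-1}$ and $X$ solves \eqref{eq:eqofinterest} there. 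On $\{B_2-B_1<0\}$ I would do the symmetric construction with the nonpositive bridge, reaching $X_2<-1$ and continuing with a $3$-Bessel-type process that keeps $X$ below $-1$; the null event $\{B_2-B_1=0\}$ is discarded. To obtain further, genuinely distinct path-by-path solutions I would also extend the bridge of the opposite sign, which forces the continuation on $[2,3]$ to start inside $(-1,1)$ and therefore requires a path-by-path solution of the singular equation \eqref{eq:nosolutiondirect2} — one that has no weak solution at all — obtained by a Cherny-type construction in which the solution is dragged towards $0$ and then oscillates, the $\pm1$-singularities of $b$ being precisely what traps it once it has left $[-1,1]$.

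\emph{Main obstacle.} The non-existence part is essentially bookkeeping once Lemmas~\ref{lem:allequations} and \ref{lem:noweaksolutiondirect2} are available — one only has to check that the restriction of a global weak solution really solves the subequation on each interval and handle the time-shifted filtrations. The hard part is the last step of the path-by-path construction: exhibiting, for almost every realization of the Brownian path on $[2,3]$, a continuous solution of the Cherny-type equation \eqref{eq:nosolutiondirect2} started inside $(-1,1)$, where the drift $-\frac{1}{2x}$ is attracting and singular at $0$. This is the one-dimensional analogue of the mechanism destroying weak existence, and it is where the precise shape of $b$ near $\{0,\pm1\}$ must be exploited.
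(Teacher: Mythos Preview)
Your non-existence argument and your first path-by-path construction (sign-matched bridge on $[0,1]$, Brownian increment on $[1,2]$, shifted $3$-Bessel on $[2,3]$) are correct and coincide with the paper's proof essentially line for line.

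The divergence is in what you call the ``main obstacle''. It is a phantom. The proposition only asserts \emph{existence} of path-by-path solutions, and your first construction already delivers that; nothing further is required. Even if one wants to exhibit a second, distinct path-by-path solution (the paper does this in a closing remark), your route---taking the bridge of the wrong sign so that $X_2\in(-1,1)$ and then trying to solve \eqref{eq:nosolutiondirect2} path-by-path from inside $(-1,1)$---is far harder than necessary. The paper simply restricts to the event $C_3=\{B_2-B_1>2\}$: there the \emph{nonpositive} bridge gives $X_1=-1$ and hence $X_2=-1+(B_2-B_1)>1$ as well, so the same shifted-Bessel continuation on $[2,3]$ works for \emph{both} bridge choices. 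Two solutions differing on a set of positive probability, no passage through $(-1,1)$ needed.

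Your proposed alternative is not merely unnecessary but dubious. A continuous path solving $dX=-\tfrac{1}{2X}\mathbbm{1}_{\{0<|X|\le 1\}}\,dt+dB$ from $X_0\in(-1,1)$ would be dragged to $0$ by the attracting singular drift; for the integral equation to make sense through $0$ one needs $\int_0^t |X_s|^{-1}\mathbbm{1}_{\{X_s\neq 0\}}\,ds<\infty$, which is incompatible with Brownian-type oscillation at $0$, and the ``trapping'' by the $\pm1$ singularities does not help while the path is still inside $(-1,1)$. So drop that paragraph: the proof is complete once you have one path-by-path solution, and multiplicity---if you want it---comes for free on $\{B_2-B_1>2\}$.
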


\textbf{Idea of the proof.}

The drift is constructed in a way such that for a solution $X$ the following must hold:
$X_1$ is forced to be equal to $1$ or $-1$. On the time interval $[1,2]$ we let a Brownian motion evolve freely without any drift. On the time interval $[2,3]$ the drift is constructed in a way such that there exists no adapted solution if $X_2 \in (-1,1)$. However if $|X_2|>1$, then $X$ can be extended to the interval $[0,3]$ with $|X_t|>1$ for all $t \in [2,3]$ while still being adapted. Hence, any adapted solution must avoid taking a value in $(-1,1)$ at time $2$. If $B_2-B_1=X_2-X_1\in (0,2)$, this can only be achieved if one can choose $X_1=1$. Similarly, if $B_2-B_1=X_2-X_1\in (-2,0)$, one must be allowed to choose $X_1=-1$. This necessity of ``looking into the future" prohibits the existence of weak solutions, but not the construction of path-by-path solutions.

\begin{proof}
\textbf{No weak solution.}

Assume that there exists a weak solution $X$ defined on some filtered probability space $(\Omega,\mathcal{F},\mathbb{F},\mathbb{P})$. By Lemma~\ref{lem:allequations}(\ref{en:b}), ${\mathbb{P}(X_1=1 \text{ or } X_1=-1)=1}$. Assume first that $\mathbb{P}(X_1=1)>0$. Let ${A_1\coloneqq \{ X_{1}=1 \}}$ and $A_2\coloneqq \{B_2-B_1 \in (-2,0)\}$. Then we have ${X_2(\omega) \in (-1,1)}$ for $\omega \in A_1\cap A_2$. As $A_1$ is $\mathcal{F}_1$-measurable and $B_2-B_1$ is independent of $\mathcal{F}_1$, $\mathbb{P}(A_1\cap A_2)>0$.
Hence, after a time shift, we would have a weak solution to the SDE
\begin{equation*}
    dX_t= \left(- \mathbbm{1}_{\{X_t\neq 0,|X_t|\leqslant 1\}} \frac{1}{2X_t}+ \mathbbm{1}_{\{X_t> 1\}} \frac{1}{X_t-1} + \mathbbm{1}_{\{X_t < -1\}} \frac{1}{X_t+1}\right) dt + dB_t,
\end{equation*}
on $[0,1]$ with $\mathbb{P}(X_0 \in (-1,1))>0$, which contradicts Lemma \ref{lem:noweaksolutiondirect2}. Hence, $X_1=-1$ a.s. By the same arguments as above we must have $X_1(\omega)=1$ for $\omega \in \tilde{A}_2$, where ${\tilde{A}_2\coloneqq \{B_2-B_1 \in (0,2)}\}$, which gives a contradiction as $\mathbb{P}(\tilde{A}_2)>0$.

\textbf{Existence of a path-by-path solution.}

Let $X^{+}, X^{-}\colon\Omega \rightarrow \mathcal{C}([0,1])$ be the nonnegative and nonpositive Bessel bridge with terminal value $1$, respectively $-1$. Let $C_1\coloneqq \{B_2-B_1>0\}$ and $C_2\coloneqq \{B_2-B_1<0\}$. Define $X\colon\Omega \rightarrow \mathcal{C}([0,2])$ such that, for $t \in [0,2]$,
\[
    X_t(\omega)\coloneqq\begin{dcases}
        X^{+}_{t\wedge 1}(\omega)+B_{t \vee 1}(\omega)-B_1(\omega) & \text{ if }\omega \in C_1, \\
        X^{-}_{t\wedge 1}(\omega)+B_{t \vee 1}(\omega)-B_1(\omega)& \text{ if } \omega \in C_2. \\
    \end{dcases}
\]

Hence $|X_2(\omega)|>1$ for $\omega \in C_1 \cup C_2$. By Lemma~\ref{lem:allequations}(\ref{en:unique}) (after a shift of the space variable), we can uniquely extend $X$ to $[0,3]$ fulfilling \eqref{eq:eqofinterest} so that $|X_t|>1$ for all $t \in [2,3]$.  As $C_1\cup C_2$ has full measure, $X$ is indeed a path-by-path solution. 

\textbf{Existence of other path-by-path solutions.}

Other path-by-path solutions can be constructed. Indeed on the set $C_3\coloneqq \{B_2-B_1>2\}$ we can choose freely if $X$ coincides with the nonnegative or nonpositive Bessel bridge on $[0,1]$ since, for $\omega \in C_3$, $X_2(\omega)>1$. Then, we can proceed the same way as before.
\end{proof}

\subsection{Pathwise unique weak solution, no path-by-path uniqueness}

The following proposition gives an example of a one-dimensional SDE with a pathwise unique weak solution, but path-by-path uniqueness does not hold. Again, the construction can easily be extended to multiple dimensions.

\begin{proposition} \label{lem:counterexample2}
Consider the SDE
\begin{equation}\label{eq:eqofinterest2}
    dX_t= b(t,X_t) dt + dB_t, \quad X_0=0, \quad t \in [0,4],
\end{equation}
where

\[
    b(t,x)=\begin{dcases}
        \mathbbm{1}_{\{x>0\}}\left(\frac{2-x}{1-t}+\frac{1}{x}\right) + \mathbbm{1}_{\{x<0\}}\left(\frac{-2-x}{1-t}+\frac{1}{x}\right) & \text{ if }0\leqslant t < 1, \\
        \mathbbm{1}_{\{x < 0\}}\frac{1}{x}+\mathbbm{1}_{\{x>2\}}\left(\frac{3-x}{2-t}+\frac{1}{x-2}\right) + \mathbbm{1}_{\{0<x<2\}}\left(\frac{1-x}{2-t}+\frac{1}{x-2}\right)& \text{ if }1 \leqslant t < 2, \\
        \mathbbm{1}_{\{x< 0\}}\frac{1}{x} & \text{ if } 2 \leqslant t <3,\\
        \mathbbm{1}_{\{x < 0\}}\frac{1}{x}-\mathbbm{1}_{\{x \neq 2, |x-2|\leqslant 1\}}\frac{1}{2(x-2)}+ \mathbbm{1}_{\{x> 3\}} \frac{1}{x-3} + \mathbbm{1}_{\{0<x <1\}} \frac{1}{x-1} &\text{ if } 3\leqslant t \leqslant 4. \\
    \end{dcases}
\]

There exists a pathwise unique weak solution to \eqref{eq:eqofinterest2}, but path-by-path uniqueness does not hold. 

\end{proposition}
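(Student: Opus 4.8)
The plan is to prove existence and pathwise uniqueness of a weak solution first, and then to exhibit a second, genuinely different path‑by‑path solution. \emph{Existence and pathwise uniqueness.} First I would observe that on $[0,1)$ the drift is exactly that of \eqref{eq:besselbridge} with $y=2$, so by Lemma~\ref{lem:allequations}(\ref{en:b}) every weak solution satisfies $X_1\in\{2,-2\}$ a.s. A strong (hence weak) solution is built by concatenating the nonpositive Bessel bridge to $-2$ on $[0,1]$ with, on $[1,4]$, the reflection of a $3$‑Bessel‑type process started from $2$: on each of $[1,2]$, $[2,3]$, $[3,4]$ the drift equals $1/x$ for $x<0$ and all remaining terms vanish at negative values, so $-X$ solves \eqref{eq:3bessel} with a strictly positive initial value and, by Lemma~\ref{lem:allequations}(\ref{en:unique}), never returns to $0$; this solution stays strictly negative on $[1,4]$. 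Pathwise uniqueness reduces to showing that \emph{every} weak solution has $X_1=-2$ a.s., because then $X|_{[0,1]}$ must be the nonpositive Bessel bridge and $X|_{[1,4]}$ the reflected $3$‑Bessel‑type process from $-2$, both pathwise unique by Lemma~\ref{lem:allequations}.

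\emph{Ruling out $X_1=2$.} Suppose $\PP(A)>0$ with $A=\{X_1=2\}$. On $A$, by the Bessel‑bridge arguments behind Lemma~\ref{lem:allequations}(\ref{en:b}), the solution on $[1,2]$ either stays in $(2,\infty)$, in which case $X_2=3$, or stays in $(-\infty,2)$; in the latter case, as long as it remains positive it equals $2-V$ with $V$ the nonnegative Bessel bridge from $0$ to $1$ driven by $-B|_{[1,2]}$, so on the event $\{\sup_{[1,2]}V<2\}$ it never leaves $(0,2)$ and $X_2=1$. Since $\{\sup_{[1,2]}V<2\}$ has positive probability (by the Lemma preceding Remark~\ref{rem:upperbound}, after reflection and a time shift) and is independent of $A\in\mathcal{F}_1$, there is a positive‑measure event on which $X_1=2$, $X$ is positive throughout $[1,2]$, and $X_2\in\{1,3\}$. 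On $[2,3]$ the drift vanishes for $x\geqslant 0$, so from $X_2\in\{1,3\}$ the solution is, while positive, a free Brownian motion; intersecting with a suitable $\mathcal{F}_2$‑independent event (namely $\{\sup_{s\in[2,3]}|B_s-B_2|<1/2\}$ together with $B_3-B_2$ having the appropriate sign) keeps $X$ positive on $[2,3]$ with $X_3\in(1,3)$, so $\PP(X_3\in(1,3))>0$. But on $[3,4]$, after the space shift $x\mapsto x-2$, the drift has exactly the form treated in Lemma~\ref{lem:noweaksolutiondirect2} for a suitable measurable $f$, so $(X_{3+t},B_{3+t}-B_3)_{t\in[0,1]}$ would be a weak solution of the shifted version of \eqref{eq:nosolutiondirect2} with $\PP(X_0\in(-1,1))>0$, contradicting that lemma. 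Hence $X_1=-2$ a.s. I expect this step to be the main obstacle: one must really exclude the scenario in which a weak solution, on all of $\{X_1=2\}$, escapes below $0$ during $[1,2]$ (there it would stay negative thereafter and cause no contradiction), and it is exactly the positive‑probability, $\mathcal{F}_1$‑independent event $\{\sup_{[1,2]}V<2\}$ that rules this out.

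\emph{Failure of path‑by‑path uniqueness.} Let $X$ be the strong solution constructed above, so $X_1=-2$ a.s.; being strong it is in particular a path‑by‑path solution. Let $X^{+}$ be the nonnegative Bessel bridge from $0$ to $2$ on $[0,1]$ and $W$ the nonnegative Bessel bridge from $2$ to $3$ on $[1,2]$ (both strong, hence defined for a.e.\ $\omega$). On the positive‑measure event $F=\{B_3-B_2>0\}\cap\{\min_{s\in[2,3]}(B_s-B_2)>-1\}$ define $\tilde X$ by: $\tilde X=X^{+}$ on $[0,1]$; $\tilde X=W$ on $[1,2]$, so $\tilde X_2=3$; $\tilde X_t=3+(B_t-B_2)$ on $[2,3]$, which stays above $2$ on $F$, hence positive, and so solves the driftless equation there; and on $[3,4]$, $\tilde X=3+R$ with $R$ the $3$‑Bessel‑type process started from $\tilde X_3-3>0$, so $\tilde X>3$ on $[3,4]$, where the drift is $1/(x-3)$. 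Off $F$, set $\tilde X=X$. Then $\tilde X$ solves \eqref{eq:eqofinterest2} on a full‑measure set — on $F$ by construction (the pieces match continuously and each solves the corresponding part of the drift), off $F$ because $\tilde X=X$ — so $\tilde X$ is a path‑by‑path solution, and on $F$ one has $\tilde X_1=2\neq-2=X_1$. Since $\PP(F)>0$, the equation admits two path‑by‑path solutions that differ on a set of positive measure, so path‑by‑path uniqueness does not hold.
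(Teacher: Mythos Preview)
Your proof is correct and follows essentially the same strategy as the paper: build the nonpositive solution and show it is the only weak solution by forcing any putative solution with $\PP(X_1=2)>0$ to satisfy $\PP(X_3\in(1,3))>0$, then invoke Lemma~\ref{lem:noweaksolutiondirect2} after a space shift.

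Two minor differences are worth recording. First, in ruling out $X_1=2$, the paper splits into the cases $\PP(X_2=1\mid X_1=2)=1$ and $\PP(X_2=3\mid X_1=2)>0$ and treats them separately; your argument is arguably cleaner, since intersecting with the $\mathcal F_1$-independent event $\{\sup_{[1,2]}V<2\}$ forces $X$ to stay positive on $[1,2]$ regardless of which branch (above or below $2$) the weak solution takes, yielding $X_2\in\{1,3\}$ in one stroke. (The subsequent ``appropriate sign'' step is then an implicit two-case split, which you should state explicitly: at least one of $\{X_2=1\}$, $\{X_2=3\}$ has positive measure on that event, and each is $\mathcal F_2$-measurable, so you intersect with the corresponding $\mathcal F_2$-independent Brownian event on $[2,3]$.) Second, for the failure of path-by-path uniqueness, the paper constructs an alternative solution with $X_1=2$ for almost every $\omega$, separately on $\{B_3-B_2>0\}$ and $\{B_3-B_2<0\}$, using an auxiliary modified drift $\tilde b$ in the latter case. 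Your construction is simpler: you only build the alternative on the positive-measure event $F$ and set $\tilde X=X$ elsewhere. This is entirely sufficient for the stated conclusion and avoids the extra bookkeeping; the paper's version gives the stronger (but here unneeded) statement that one can pass through $X_1=2$ for a.e.\ realization.
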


\textbf{Idea of the proof.}
On the time interval $[0,1]$ the drift is the one of an SDE solved by a Bessel 
bridge. Hence, for $t \in [0,1]$ there exists a pathwise unique nonnegative weak 
solution and a pathwise unique nonpositive weak solution. On $[1,4]\times \mathbb{R}_{-}$ 
the drift is constructed such that the nonpositive Bessel bridge on $[0,1]$ can be extended in 
a unique way to a weak solution on the time interval $[0,4]$. On $[1,4]\times 
\mathbb{R}_{+}$ the drift is constructed so that we can extend the nonnegative Bessel 
bridge on the time interval $[0,1]$ to path-by-path solutions on $[0,4]$, but not to a weak solution. Hereby 
solutions are allowed to enter the negative half plane (which is no problem as the drift there ensures ensures the existence of nonpositive solution), but with positive probability a situation as in 
Proposition~\ref{lem:counterexample1} occurs; i.e. weak solutions $X$ are forced to fulfill 
$X_3 \in (1,3)$ with positive probability and on the time interval $[3,4]$ 
the drift is constructed such that these solutions cannot be extended to $[0,4]$.

\begin{proof}

\textbf{Existence of a pathwise unique nonnegative weak solution.}

Note that there exists a pathwise unique nonpositive weak solution $\tilde{X}$ on $[0,1]$ to \eqref{eq:eqofinterest2} with $\tilde{X}_1=-2$ by Lemma~\ref{lem:allequations}\eqref{en:b}. We can extend this solution in a pathwise unique way to $[0,4]$ by Lemma~\ref{lem:allequations}\eqref{en:unique}.

\textbf{No other weak solution.}

Assume that there exists another weak solution $X$ to \eqref{eq:eqofinterest2}. Then we must have $\mathbb{P}(X_1=2)>0$. Let
\begin{align*}
A_1\coloneqq \{X_1=2\}, A_2\coloneqq \{\inf_{t \in [1,2]} X_t>0, X_2=1\}, A_3\coloneqq \{\inf_{t \in [2,3]} (B_t-B_2)>-1, B_3-B_2 \in (0,2)\}.
\end{align*}

First assume that $\mathbb{P}(X_2 = 1 \mid X_1=2)=1$. Then by assumption and by Remark~\ref{rem:upperbound}, for $\varepsilon>0$ small enough,
\begin{align*}
\mathbb{P}(A_1\cap A_2)&\geqslant \mathbb{P}(A_1\cap \{\sup_{t \in [1,2]} |B_t-B_1|<\varepsilon\})\\
&=\mathbb{P}(A_1)\mathbb{P}(\sup_{t \in [1,2]} |B_t-B_1|<\varepsilon)>0.
\end{align*}

Hence $\mathbb{P}(\bigcap_{i=1}^{3} A_i)>0$ as $A_3$ is independent of $A_1\cap A_2$. Note that $X_3(\omega) \in (1,3)$ for ${\omega \in \bigcap_{i=1}^{3} A_i}$. This gives a contradiction to Lemma~\ref{lem:noweaksolutiondirect2} as after a space shift we would have a weak solution to Equation \eqref{eq:nosolutiondirect2} with initial condition $X_0$ fulfilling $\mathbb{P}(X_0 \in (-1,1))>0$. 

Assume now that $\mathbb{P}(X_2=1 \mid X_1=2)<1$ and therefore $\mathbb{P}(X_2=3 \mid X_1=2)>0$. Let $\tilde{A}_1\coloneqq A_1$, $\tilde{A}_2\coloneqq \{X_2=3\}$ and
\begin{equation*}
\tilde{A}_3\coloneqq \{\inf_{t \in [2,3]} (B_t-B_2)>-3, B_3-B_2 \in (-2,0)\}.
\end{equation*}

Then by assumption $\mathbb{P}(\bigcap_{i=1}^{3} \tilde{A}_i)>0$ and $X_3(\omega) \in (1,3)$ for $\omega \in \bigcap_{i=1}^{3} \tilde{A}_i$ and again this leads to a contradiction to Lemma~\ref{lem:noweaksolutiondirect2}.

\textbf{Construction of another path-by-path solution.}

By separately considering the sets $C_1\coloneqq \{B_3-B_2>0\}$ and $C_2 \coloneqq \{B_3-B_2<0\}$ we construct an additional path-by-path solution on the set $C_1\cup C_2$ of full measure. Let $X\colon C_1 \rightarrow \mathcal{C}([0,2])$ coincide with the Bessel bridges such that $X_1=2$ and $X_2=3$. Let $\tau\coloneqq \inf\{t\geqslant 2: B_t-B_2=-3\}$. Then, by Lemma~\ref{lem:allequations}\eqref{en:unique}, for $\omega \in C_1 \cap \{\tau>3\}$ we can extend $X$ to the time interval $[0,4]$ fulfilling equation \eqref{eq:eqofinterest2}.
Now consider $\omega \in C_1 \cap \{\tau\leqslant 3\}$. We can clearly extend $X$ to the time interval $[0,\tau]$ by adding the Brownian increment $B_t-B_2$. Then we have that $X_{\tau}(\omega)=0$. Hence, for $\omega \in C_1 \cap \{\tau\leqslant 3\}$ and $t \in [\tau,4]$, we can choose $X$ to be the pathwise unique nonpositive solution to
\begin{align*}
dX_t= \frac{1}{X_t} dt + dB_t, \ t \in [\tau,4]
\end{align*}
and therefore we can construct $X\colon C_1 \rightarrow \mathcal{C}([0,4])$ fulfilling equation \eqref{eq:eqofinterest2}.

Let
\begin{equation*}
\tilde{b}(t,x)\coloneqq b(t,x)-\mathbbm{1}_{\{t\geqslant 1, x<0\}}\frac{1}{x}+\mathbbm{1}_{\{t\geqslant 3, x \leqslant 0\}} \frac{1}{x-1}.
\end{equation*}
For $\omega \in C_2$ and $t \in [0,4]$, let $\tilde{X}$ fulfill
\begin{equation*}
\tilde{X}_t(\omega)=\int_0^t \tilde{b}(s,\tilde{X}_s(\omega))ds + B_t(\omega)
\end{equation*}
such that $\tilde{X}$ coincides with the two Bessel bridges on $[0,1]$ and $[1,2]$ so that  $\tilde{X}_1=2$ and $\tilde{X}_2=1$. This is possible by the same arguments as in the proof of Proposition~\ref{lem:counterexample1}. Let ${{\tau_0}\coloneqq  \inf\{t >1:\tilde{X}_t=0\}\wedge 4}$. On $[0,{\tau_0}]\cap [0,4]$ let $\hat{X}\coloneqq \tilde{X}$. Then $\hat{X}$ is a solution to \eqref{eq:eqofinterest2} on $[0,{\tau_0}]$ as, for $x> 0$, $b(t,x)=\tilde{b}(t,x)$. On $[{\tau_0},4]$, we choose $\hat{X}$ to be the pathwise unique nonpositive solution to
\begin{equation*}
dX_t=\frac{1}{X_t} dt + dB_t,\ t \in [{\tau_0},4].
\end{equation*}
As $\mathbb{P}(C_1\cup C_2)=1$, we can construct a path-by-path solution $Y\colon \Omega \rightarrow \mathcal{C}([0,4])$ by setting
\[
    Y(\omega)=\begin{dcases}
        X(\omega) & \text{ if }\omega \in C_1, \\
        \hat{X}(\omega)& \text{ if } \omega \in C_2. \\
    \end{dcases}
\]
\end{proof}

\end{document}